\definecolor{paleaqua}{rgb}{0.74, 0.83, 0.9}
\definecolor{cadetblue}{rgb}{0.37, 0.62, 0.63}
\definecolor{bittersweet}{rgb}{1.0, 0.44, 0.37}
\definecolor{amethyst}{rgb}{0.6, 0.4, 0.8}
\definecolor{VioletRed4}{rgb}{ 0.55, 0.13, 0.32}
\definecolor{MediumPurple}{RGB}{147, 112, 219}
\definecolor{mypink3}{cmyk}{0, 0.7808, 0.4429, 0.1412}
\definecolor{mygray}{gray}{0.6}
\definecolor{airforceblue}{rgb}{0.36, 0.54, 0.66}
\newtheorem{theorem}{Theorem}[section]
\newtheorem{prop}[theorem]{Proposition}
\newtheorem{cor}[theorem]{Corollary}
\newtheorem{definition}[theorem]{Definition}
\newtheorem{example}[theorem]{Example}
\newtheorem{rem}{Remark}[section]
\numberwithin{equation}{section}
\providecommand{\keywords}[1] {
  \small
  \textbf{\textit{Keywords---}} #1
}
\title{Formal proof of the approximation of a random phenomenon by a chaotic phenomenon}
\author[1]{Mohamed El Ouafi}
\author[2]{Hajar Ahalli}
\author[,1]{Abderrahim Aslimani\thanks{Corresponding author: a.slimani@ump.ac.ma}}
\author[1]{Kaoutar Lamrini Uahabi}
\affil[1]{Department of
Mathematics, Faculty of Applied Sciences, Mohammed First University, Laboratory of Applied Mathematics and Information Systems (LMASI), Mathematics and Applications Team, BP
300-62700, Selouane, Nador, Morocco\\ \vspace{.4cm}}
\affil[2]{Department of Mathematics, Faculty of Sciences, Mohammed First University, Stochastic and Deterministic Modeling Laboratory, BP 717-60000,  Oujda, Morocco.}
\begin{document}
\maketitle
\begin{abstract}
This article examines the subtle relationship between chaos and randomness, two concepts that, although they refer to seemingly unpredictable phenomenon, are based on fundamentally different principles. Chaos manifests in deterministic systems where small variations in initial conditions lead to unpredictable long-term behaviors, while randomness pertains to intrinsically probabilistic processes, characterized by fundamental uncertainty.
Although these phenomena are based on distinct mechanisms, they can interact and converge in contexts as varied as the modeling of natural phenomena, climate forecasts, financial markets, etc.. Despite their differences, these two phenomena share common characteristics, such as the absence of apparent order and an unpredictability that defies our attempts at long-term prediction.
Through an analysis of chaos theory and probability, this article aims to clarify the distinctions and highlight the deep connections between these two concepts in real systems. The objective of this article is to present a comprehensive approach aimed at demonstrating that, under certain hypothesizes, a random phenomenon can be effectively represented and approximated by a chaotic phenomenon. By examining this possibility, we seek to establish the theoretical foundations that connect these two concepts, often perceived as distinct, but whose dynamics could prove to be analogous in certain contexts. \\

\end{abstract}

\keywords{Chaotic dynamics; Randomness; Discrete dynamical systems;
Invariant measure; Ergodicity.}\vspace{.2cm}

\textbf{2020 Mathematics Subject Classification.}  37A05,  37A25, 37D45.

\section{Introduction}
\mbox{~}

The modeling of random phenomena fundamentally relies on probability laws approximated by algorithmically generated pseudo-random sequences. This methodology, widely used in various fields including numerical simulation, modeling of natural phenomena, financial market analysis, artificial intelligence, and information system encryption, is rooted in deterministic chaotic systems. In artificial intelligence, stochastic algorithms underpin fundamental techniques such as deep learning optimization, Monte Carlo tree search, and generative models, while in cryptography, the security of encryption protocols depends critically on the quality of random number generation for cryptographic keys and initialization vectors. However, despite their well-documented empirical effectiveness, no formal proof has yet established the ability of these generators to faithfully reproduce random laws within a rigorous mathematical framework.

The emergence of this paradigm finds its roots in a remarkable conceptual evolution throughout the 20th century. Beginning in the 1970s, work in ergodic theory, particularly Bernoulli transformations and Ornstein's isomorphism (1972) \cite{Co}, began to mathematically formalize the appearance of randomness in deterministic systems. This theoretical foundation was enriched by Takahashi (1984) \cite{Tak}, who proposed an explicit conceptual bridge between chaos and statistical mechanics, establishing formal analogies between Lyapunov exponents and "internal energy," or between Kolmogorov entropy and thermodynamic entropy. Eckmann and Ruelle (1985) \cite{Ec} subsequently consolidated this framework by showing how concepts of unpredictability and fluctuations emerge naturally in nonlinear deterministic systems.

The development of this research trajectory continued with Ornstein (1989) \cite{Or}, who compared deterministic and random systems, demonstrating that statistical properties typically associated with randomness can originate from systems governed by deterministic laws. This perspective was complemented by the practical approach of James (1995) \cite{Ja}, examining how pseudo-random generators can exploit chaotic properties to produce sequences sufficiently random for demanding computational applications. Finally, Gallavotti and Cohen (1995) \cite{Ga} extended this connection to non-equilibrium systems through their "chaotic hypothesis," solidifying the idea that randomness observed in complex systems emerges intrinsically from the deterministic dynamics itself, rather than as mere external perturbation.

Contemporary research attests to the enduring vitality of chaos-based approaches, with concrete applications across various technological domains. In this context, our recent work \cite{AH} advances the field by proposing a novel image encryption framework that integrates the stochastic Vasicek process with a modified chaotic logistic map. This hybrid approach leverages the stochastic properties of the Vasicek process and the sensitivity of chaotic dynamics to achieve enhanced security and robustness in image encryption. Following the Fridrich confusion-diffusion structure, their method generates process parameters and an initialization state from a secret key of arbitrary length. The Vasicek process performs multiple iterations—the number of which is dynamically determined by the secret key—to produce floating-point values that are subsequently discretized. These discretized values serve a dual purpose: they generate the permutation for pixel confusion and produce pseudo-random sequences for pixel diffusion. The architecture supports multiple encryption rounds to enhance security. Experimental validation confirms the algorithm's robustness, including a large key space, resistance to statistical and differential attacks, and overall efficiency, demonstrating its suitability for secure image transmission.

Recent advances in memristive chaotic systems have been particularly fruitful. Deng et \emph{al.} (2025) \cite{De2} introduced a discrete memristive conservative chaotic map (DMCM), demonstrating enhanced ergodicity and randomness compared to dissipative systems, with successful FPGA implementation and superior bit error rate performance in secure communication applications. Similarly, research on bursting firings in memristive Hopfield neural networks has shown promising results for image encryption and hardware implementation \cite{Yu}. The work of Emin $\&$ Yaz (2024) \cite{Em} on the digital implementation of chaotic systems using the Nvidia Jetson AGX Orin platform demonstrates the practical application of these concepts for real-time signal processing. Similarly, chaos-based encryption techniques developed by Kıran (2024) \cite{Ki}, leveraging GPU parallelism, illustrate the potential of these systems for information security. Fundamental analyses of dynamical systems, such as the study of the discrete T-system by Rana (2023) \cite{Ra}, continue to provide the theoretical foundations necessary for these applied developments. Finally, chaotic oscillator designs on FPGA proposed by Taşdemir et \emph{al.} (2025) \cite{Ta} open promising perspectives for dedicated hardware implementation of high-performance random generators.\\


Our work aims to develop a mathematical framework demonstrating how random phenomena can be approximated by chaotic dynamics on metric compact sets. Specifically, we establish that any elementary event in a probability space can be approximated by a convergent subsequence of chaotic phenomena. This advancement transcends conventional testing of pseudo-random number generators which typically assess uniformity, independence, periodicity, and autocorrelation by providing a rigorous mathematical foundation for the approximation of random phenomena by chaotic systems.

In what follows, we define and analyze discrete chaotic dynamical systems, introducing a new characterization of chaos that generalizes the classical Devaney framework \cite{De}. We first present the fundamental concepts of randomness and chaos, then develop the mathematical apparatus necessary to establish our main results concerning the formal approximation of random phenomena by deterministic chaotic systems.



\section{Chaos and randomness: approaches and convergences}

Let $\Omega$ be  an infinite locally compact topological space and $\mathcal{F}:=\mathcal{B}(\Omega)$ the associated Borel $\sigma$-algebra.

\begin{definition}

\begin{itemize}
    \item 
A (discrete) dynamical system is given by a pair $(\Omega, T)$ where $T : \Omega \to \Omega$ is a (often continuous) function representing the evolution of the dynamical system. 
\item Let $(\Omega, T)$ be a dynamical system and $\nu$ a nonzero Radon measure on $\Omega$. The dynamical system $(\Omega, T)$ is said to be (topologically) \textbf{chaotic} with respect to $\nu$ if for almost every $x_0 \in \Omega$ with respect to $\nu$, for every open set $ U \subset \Omega $, there exists an integer $ n \in \mathbb{N} $ such that $ T^n(x_0) \in U $. This means that the trajectories of $ T $ are dense in $ \Omega $ $\nu$-almost everywhere. We then say that the sequence $ (u_n) $ defined by

$$
\left\{
\begin{array}{ll}

u_{n+1}&=T\left(u_n\right) \\

u_0&=x_0,

 \end{array}
\right.
$$ 
 
is \textbf{chaotic}.
\end{itemize}

\end{definition}

Thus, a dynamical system $(\Omega, T)$ is chaotic if the iterations of points in $ \Omega $ can reach any open set in $ \Omega $ from $\nu$-almost any initial point. The application $ T $ is often called the evolution function or transition function of the dynamic system, and it describes how the state of the system evolves over time.

In what follows, the space $ \Omega $ is equipped with a distance $ d $, and when reference is made to the measure $ \nu $, it is understood to be a Radon measure on the measurable space $ (\Omega, \mathcal{F}) $. Moreover, the transformation $ T $ is assumed to be continuous on $ \Omega $.\\

We first state our main result, which stipulates that, there exists a probability measure $\mu$ on $(\Omega,\mathcal{F})$ such that the points of $\Omega$ (i.e., the elementary events) can be approximated by subsequences generated by a chaotic dynamic. In other words, within this framework, a random phenomenon can be approximated by a chaotic phenomenon.

\begin{theorem}
Suppose that the space $\Omega$ is compact. Then, any random phenomenon defined on $\Omega$ with  a sigma-algebra  $\mathcal{G}\subset\mathcal{F}$ (particularly when  $\mathcal{G}=\mathcal{F}$), can be approximated by a chaotic phenomenon on $\Omega$.
\end{theorem}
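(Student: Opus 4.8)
The plan is to read the informal phrase ``approximated by a chaotic phenomenon'' in the precise sense dictated by the definition above: I want to exhibit a continuous map $T:\Omega\to\Omega$ and a nonzero Radon measure $\nu$ such that $(\Omega,T)$ is chaotic with respect to $\nu$, i.e.\ for $\nu$-almost every $x_0$ the forward orbit $\{T^n(x_0):n\in\mathbb{N}\}$ is dense in $\Omega$. Once such a system is in hand, density is exactly the approximation statement: given any elementary event $x\in\Omega$ and any $\varepsilon>0$, the orbit meets the ball $B(x,\varepsilon)$, so $x$ is the limit of a subsequence $(T^{n_k}(x_0))_k$ of the chaotic trajectory. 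The probability measure $\mu$ carried by the random phenomenon on $(\Omega,\mathcal{G})$ then plays no structural role: the \emph{same} chaotic system approximates every point of $\Omega$ irrespective of $\mu$, and the hypothesis that $\mathcal{G}$ is countably generated serves only to guarantee that the random phenomenon lives on a sufficiently tame (essentially separable) space. I would also record at the outset a simplification offered by the definition: since $\nu$ is not required to be $T$-invariant, a single point $x_0$ with dense orbit already suffices, taking $\nu=\delta_{x_0}$; thus the whole theorem reduces to the existence of \emph{one} dense orbit, though I would aim for the more robust version with a full-support $\nu$ and $\nu$-almost every orbit dense.

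First I would use that a compact metric space is separable: fix a countable base $\{U_k\}_{k\ge 1}$ of nonempty open sets (this simultaneously shows $\mathcal{F}=\mathcal{B}(\Omega)$ is countably generated, consistent with the hypothesis on $\mathcal{G}$). The core construction I would pursue is symbolic. By the Hausdorff--Alexandroff theorem there is a continuous surjection $\pi:C\to\Omega$ from the Cantor set $C=\{0,1\}^{\mathbb{N}}$. On $C$ the shift $\sigma$ is topologically transitive and, under the Bernoulli measure $m$, the set of points with dense orbit has full measure; if $\omega^\ast$ is such a point then $\{\pi(\sigma^n\omega^\ast):n\in\mathbb{N}\}=\pi(\{\sigma^n\omega^\ast\})$ is dense in $\pi(C)=\Omega$ since $\pi$ is continuous and onto. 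To upgrade this shadow of a dense orbit into a genuine dynamical system \emph{on} $\Omega$, I would choose $\pi$ so that its fibres form a $\sigma$-invariant partition, i.e.\ $\pi(\omega)=\pi(\omega')\Rightarrow\pi(\sigma\omega)=\pi(\sigma\omega')$; then $\sigma$ descends to a continuous $T:\Omega\to\Omega$ with $T\circ\pi=\pi\circ\sigma$, and $x_0:=\pi(\omega^\ast)$ has dense $T$-orbit. I would set $\nu:=\pi_\ast m$, a nonzero Radon probability measure on $\Omega$.

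The remaining step is routine. For the $\nu$-almost-everywhere statement I would transport ergodicity: $(C,\sigma,m)$ is ergodic, hence so is the factor $(\Omega,T,\nu)$, and $\nu$ has full support because $\pi$ is onto; then for each $k$ one has $\nu\big(\bigcup_{n\ge 0}T^{-n}U_k\big)=1$, and intersecting over the countable base shows that $\nu$-almost every $x_0$ has an orbit meeting every $U_k$, i.e.\ a dense orbit. Density then yields the advertised subsequential approximation of every point of $\Omega$ exactly as in the first paragraph.

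The main obstacle is the middle step: producing a genuinely continuous transitive self-map on an \emph{arbitrary} infinite compact metric space. Concretely, Hausdorff--Alexandroff gives \emph{some} surjection $\pi$, but the equivariance condition that its fibres be $\sigma$-invariant (so that $\sigma$ descends) is a real constraint, and I expect the bulk of the work to lie in constructing a compatible $\pi$ or, equivalently, in the direct alternative of choosing a dense sequence $(y_n)$ with $y_{n+1}=T(y_n)$ whose successor map extends continuously to all of $\Omega$ --- the continuity of this extension across accumulation points being precisely what can fail. A useful consistency check guiding the construction is that any isolated point of $\Omega$ must itself lie on the dense orbit, which constrains how $T$ may act near such points; by contrast the measure-theoretic half (Birkhoff transitivity together with the ergodic argument above) is entirely standard once a transitive $T$ is available.
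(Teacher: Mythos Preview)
Your route differs substantially from the paper's. The paper takes precisely the ``direct alternative'' you flag at the end: it fixes an arbitrary dense sequence $(x_n)$ in $\Omega$, sets $T_0(x_n):=x_{n+1}$ on $\Omega_0=\{x_n\}$, asserts that $T_0$ is continuous ``by construction'', and invokes an unnamed extension theorem to obtain a continuous $T:\Omega\to\Omega$ with $T^n(x_0)=x_n$. It then builds a $T$-invariant probability $\mu$ by a Krylov--Bogolyubov/Prokhorov compactness argument (rather than your pushforward $\pi_\ast m$), argues ergodicity of $\mu$ from the single dense orbit, and applies Birkhoff to get $\mu$-a.e.\ density. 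There is also a fourth step you deliberately omit: using the countable generators of $\mathcal{G}$ to define $X_0:\Omega\to\{0,1\}^{\mathbb N}$ by $\omega\mapsto(\mathbf 1_{A_n}(\omega))_n$, composing with a Borel isomorphism $\{0,1\}^{\mathbb N}\to\Omega$, and declaring the probability of the random phenomenon to be the image measure $P=\mu\circ X^{-1}$.

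Your diagnosis of the obstacle is exactly the point the paper glosses over: for a generic dense sequence the successor map $x_n\mapsto x_{n+1}$ need not be continuous on $\Omega_0$ (let alone uniformly continuous), so no extension theorem applies, and correspondingly there is no free reason a Hausdorff--Alexandroff surjection can be chosen with $\sigma$-invariant fibres. Your remark that $\nu=\delta_{x_0}$ already meets the paper's definition of ``chaotic'' is sharper than anything in the paper and shows that its Steps~2--3 are inessential once Step~1 is granted. What your plan buys is honesty about where the real work lies and a cleaner ergodic package (factor of a Bernoulli shift, full-support measure for free); what the paper's route would buy, were Step~1 made rigorous, is that everything stays on $\Omega$ without an auxiliary symbolic model. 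On Step~4, your dismissal is defensible at the level of approximating elementary events, but note that the paper treats the manufacture of a specific probability $P$ on $(\Omega,\mathcal G)$ out of the chaotic data as part of what ``approximated by a chaotic phenomenon'' is meant to deliver, so your write-up should at least acknowledge that discrepancy in interpretation.
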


As consequence of this theorem we obtain:

\begin{cor}
    Suppose the dynamical system $(\Omega, T)$ is chaotic for some transformation $T: \Omega \to \Omega$ with $\Omega$ is compact. Then there exists a probability measure on $(\Omega, \mathcal{F})$ such that every element (which constitutes an elementary event) of $\Omega$ can be approximated by a convergent subsequence of a chaotic sequence of $(\Omega, T)$.
\end{cor}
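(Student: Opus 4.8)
The plan is to deduce the corollary directly from the preceding theorem, specialized to the full Borel structure. First I would observe that, since $\Omega$ is compact and metrizable (it carries the distance $d$), it is second countable; consequently the Borel $\sigma$-algebra $\mathcal{F}=\mathcal{B}(\Omega)$ is countably generated. The theorem therefore applies with the choice $\mathcal{G}=\mathcal{F}$, and it furnishes a probability measure $\mu$ on $(\Omega,\mathcal{F})$ together with the assertion that the random phenomenon it carries is approximated by a chaotic phenomenon on $\Omega$. Because a chaotic transformation $T$ is already given by hypothesis, the remaining work is to show that the approximating chaotic sequences can be taken to be the orbits of this particular $T$, and that \emph{every} single point of $\Omega$ (not merely $\mu$-almost every point) is reached as a subsequential limit.

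The core mechanism is the following. Since $(\Omega,T)$ is chaotic with respect to some nonzero Radon measure $\nu$, the set of initial points $x_0$ whose orbit is dense in $\Omega$ has positive $\nu$-measure; in particular it is nonempty. I would fix one such $x_0$ and set $u_n=T^n(x_0)$, so that $(u_n)$ is a chaotic sequence in the sense of the definition. Density of the orbit then upgrades to pointwise approximation of all of $\Omega$ by a metric-space argument: for any target $x\in\Omega$ and any $k\in\mathbb{N}$, the open ball $B(x,1/k)$ is a nonempty open set, hence contains $u_n$ for some index; choosing these indices strictly increasing produces a subsequence $(u_{n_k})$ with $d(u_{n_k},x)<1/k$, so that $u_{n_k}\to x$. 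This is exactly the required approximation of the elementary event $x$ by a subsequence of a chaotic sequence of $(\Omega,T)$.

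It remains to pin down the probability measure, which I would obtain in one of two interchangeable ways. Either it is inherited directly from the theorem as above, or one constructs it intrinsically from the dynamics via the Krylov--Bogolyubov procedure: the empirical averages
$$\mu_N=\frac{1}{N}\sum_{n=0}^{N-1}\delta_{T^n(x_0)}$$
form a sequence of probability measures on the compact metric space $\Omega$, whose space of probability measures is weak-$\ast$ compact and metrizable, so some subsequence converges to a limit $\mu$; continuity of $T$ guarantees that $\mu$ is $T$-invariant, and normalization (automatic here, since a Radon measure on a compact space is finite) yields a genuine probability measure. Either route produces the measure demanded by the statement.

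The main obstacle, as I see it, is the apparent mismatch between the ``$\nu$-almost every $x_0$'' clause in the definition of chaos and the ``every element of $\Omega$'' clause in the conclusion. The resolution is conceptual rather than computational: one does not need every initial point to have a dense orbit, only one, and that one is supplied by the nonvanishing of $\nu$; the density of that single orbit is a genuinely topological, hence everywhere-valid, property, so it delivers subsequential limits for all targets $x\in\Omega$ simultaneously. The secondary subtlety --- that the chaotic phenomenon produced by the theorem be realizable through the given $T$ rather than through some abstract transformation --- is dissolved precisely by the hypothesis that $(\Omega,T)$ is already chaotic, so that its own orbits serve as the approximating sequences.
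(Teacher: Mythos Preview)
Your proposal is correct and follows essentially the same route as the paper: fix a point $x_0$ whose $T$-orbit is dense (guaranteed by the chaoticity hypothesis), observe that density makes every $x\in\Omega$ a subsequential limit of $(T^n(x_0))$, and supply the probability measure via a Krylov--Bogolyubov construction. The paper packages the last step by invoking Steps~2 and~3 of the proof of the theorem (pushforwards of an initial Radon measure rather than empirical averages along the orbit), but the mechanism is the same; your explicit $B(x,1/k)$ argument spells out what the paper records as ``since the sequence $(u_n)$ is dense in $\Omega$, all points in $\Omega$ are unpredictable.'' One small caveat: your first option for the measure---inheriting it directly from the theorem---would yield a measure tied to the transformation \emph{constructed} in that proof, not to the given $T$; your Krylov--Bogolyubov alternative is the one that actually matches the paper's intent of producing a $T$-invariant probability.
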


To prove this theorem, we will need certain fundamental notions and preliminary results from the theory of dynamical systems and ergodicity.\\

\begin{definition} 
A dynamical system $ (\Omega,T) $ is said to be \textbf{sensitive to initial conditions} if for almost every $ x_0 \in \Omega $ with respect to $ \nu $, its trajectories $ (T^n(x_0))_{n\in\mathbb{N}} $ are sensitive to initial conditions. In other words, there exists $ \delta > 0 $ such that for every $ \varepsilon > 0 $, one can find a point $ y_0 \in \Omega $ such that $ d(x_0,y_0) < \varepsilon $ (initially close) and an integer $ n $ such that
$$
d(T^n(x_0), T^n(y_0)) > \delta.
$$
  
\end{definition}

This means that small initial disturbances can lead to large divergences after a certain period of time.

\begin{rem}
    \rm{In a dynamical system $(\Omega,T)$, if $T$ is continuous, the proximity of trajectories is preserved at each iteration. For example, if two points $x_0$ and $y_0$ in $\Omega$ are
very close, then $T(x_0)$ and $T(y_0)$ will be so as well, and so on. This means that a small perturbation in the initial state $x_0$ will lead to a small perturbation in the evolution of the system. So if continuity is respected, a small variation in the initial state will regularly affect the evolution of the system. However, this disturbance, although small at the beginning, eventually becomes significant over a
long period. This phenomenon of "preserved proximity" is crucial in the study of sensitivity to conditions
initials. Without continuity of $T$, two very close points can diverge abruptly at each iteration, which complicates the analysis of the dynamics.}
\end{rem}

\begin{prop}
If the dynamical system $(\Omega,T)$ is chaotic with respect to $\nu$, then it is sensitive to initial conditions. 
\end{prop}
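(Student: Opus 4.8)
The plan is to establish sensitivity by a direct construction in which the separation constant is fixed once and for all from the geometry of $\Omega$, and then shown to be witnessed in every neighbourhood of a $\nu$-generic point. Since $\Omega$ is infinite I may choose two distinct points $a,b\in\Omega$, set $3\delta:=d(a,b)>0$, and consider the disjoint open balls $U=B(a,\delta)$ and $V=B(b,\delta)$; the triangle inequality then gives $d(U,V)\geq\delta$, so that any pair of points landing one in $U$ and one in $V$ is automatically $\delta$-separated. This $\delta$ is my candidate sensitivity constant, and the whole argument reduces to arranging such a landing along two orbits that start arbitrarily close together.

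First I would work with a single $\nu$-generic initial condition $x_0$ whose orbit $(T^n(x_0))_{n\in\mathbb{N}}$ is dense; by hypothesis the set $\Omega_0$ of such points has full $\nu$-measure. Fixing $\varepsilon>0$ and any $y_0\in B(x_0,\varepsilon)$ with $y_0\neq x_0$, I note that density of the orbit of $x_0$ guarantees that it visits $U$ and visits $V$. The aim is to produce a single time $n$ at which the two orbits are pried apart, say $T^n(x_0)\in U$ and $T^n(y_0)\in V$, so that $d\bigl(T^n(x_0),T^n(y_0)\bigr)\geq d(U,V)\geq\delta$. Granting such an $n$, sensitivity at $x_0$ holds with constant $\delta$, and because $\Omega_0$ has full measure the conclusion would then hold for $\nu$-almost every $x_0$, the countable operations involved preserving the ``almost every'' qualifier.

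The hard part will be exactly this middle step: forcing the desynchronisation $T^n(x_0)\in U$, $T^n(y_0)\in V$ at a \emph{common} time $n$. Density of each orbit separately ensures that each one visits $U$ and $V$, but it controls nothing about their relative phase, and this gap is not a mere technicality. For an isometry such as an irrational rotation of the circle, every orbit is dense, yet $d\bigl(T^n(x_0),T^n(y_0)\bigr)=d(x_0,y_0)<\varepsilon$ for all $n$, so the system is equicontinuous and plainly not sensitive. Hence density of orbits alone cannot deliver the conclusion, and the proof must invoke an additional mechanism that breaks equicontinuity — for example dense periodic points in the spirit of Banks, Brooks, Cairns, Davis and Stacey, or a local expansiveness (non-isometry) property of $T$. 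Extracting such a property from the standing hypotheses, or isolating it as the genuinely required assumption, is where I expect the real difficulty, and the main obstacle, to lie.
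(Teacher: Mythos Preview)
Your instinct is correct, and the obstacle you have isolated is not a technicality: the proposition, as stated, is actually false, and the irrational rotation you mention is a genuine counterexample. With $\Omega=\mathbb{R}/\mathbb{Z}$, $T(x)=x+\alpha$ for irrational $\alpha$, and $\nu$ Lebesgue measure, every orbit is dense (so the system is ``chaotic with respect to $\nu$'' in the paper's sense), yet $d(T^n x_0,T^n y_0)=d(x_0,y_0)$ for all $n$, so no sensitivity constant $\delta$ can work once $\varepsilon<\delta$. Density of orbits alone therefore cannot imply sensitivity, and the step you flagged --- forcing $T^n(x_0)\in U$ and $T^n(y_0)\in V$ at a \emph{common} time $n$ --- is impossible in general.

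The paper's own argument fails at exactly the point you anticipated. It fixes a compact neighbourhood $U$ of $x_0$ and observes that $T^n$ cannot be constant on $U$ for every $n$ (else the orbit closure would lie in $U$, contradicting density); hence for \emph{some} $y_0\in U$ and \emph{some} $n_0$ one has $T^{n_0}(x_0)\neq T^{n_0}(y_0)$, and the paper then takes $\delta_0$ strictly between $0$ and this distance. But that $\delta_0$ depends on $U$, i.e.\ on $\varepsilon$: as the neighbourhood shrinks, the separation produced may shrink to zero. Sensitivity requires $\delta$ to be chosen \emph{before} $\varepsilon$, and the paper's contradiction is set up with the wrong quantifier negation, so it never secures this. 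In effect the paper proves only that $T$ is not locally constant near a point with dense orbit --- a much weaker statement, trivially satisfied by the irrational rotation.

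So your proposal is not deficient relative to the paper's proof; you have correctly diagnosed why the implication cannot hold without an extra hypothesis ruling out equicontinuous dynamics (e.g.\ dense periodic points plus transitivity, as in Banks--Brooks--Cairns--Davis--Stacey, or some local expansion). The paper conceals the same gap behind an incorrect negation of the definition.
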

 
\begin{proof}
Since the dynamical system $(\Omega,T)$ is chaotic with respect to $\nu$, we can take $ x_0 \in \Omega $ such that its trajectories $ \left(T^n(x_0)\right) $ are dense in $ \Omega $. Let $ U $ be a small compact neighborhood of $ x_0 $ and consider a perturbed fixed point $ y_0 \in U $. We assume, by contradiction, that for every $ \delta>0 $, the distance between $ T^n( x_0) $ and $ T^n (y_0) $ always remains less than or equal to $ \delta>0 $ for all $ n $. To do this, we first show that there exists $ n_0 \in \mathbb{N}^{\ast} $ such that 
$$
d(T^{n_0}(x_0), T^{n_0}(y_0)) > 0 .
$$ 
Indeed, if for all $ n \in \mathbb{N}^{\ast} $ we have $ T^{n}(x_0) = T^{n}(y_0) $ and this for all $ y_0 \in U $, then the application $ T^{n} $ will be constant on $ U $ for all $ n \in \mathbb{N}^{\ast} $ and thus $ \overline{(T^n(x_0))} \subset U $, which is impossible because the sequence $ (T^n(x_0)) $
is dense in $\Omega$. Therefore there exists $\delta_0\in\mathbb{R}$ such that 
$$
d(T^{n_0}(x_0),T^{n_0}(y_0))>\delta_0>0,
$$
which is absurd with the hypothesis. Consequently, the dynamical system $(\Omega,T)$ is sensitive to initial conditions.
\end{proof}

\begin{rem}
    \rm{Sensitivity to initial conditions is a fundamental property of chaotic systems. In a finite space, this property is impossible to satisfy. Indeed, let $(\Omega, T)$ be a dynamical system where $\Omega$ is a \textbf{finite} space equipped with a distance $d$. Let us denote by $d_{\text{min}}$ the smallest distance between two distinct points
$$
d_{\text{min}}: = \min\{d(x,y) \mid x, y \in \Omega, x \neq y\}.
$$
Let $x_0\in\Omega$ be fixed. To contradict sensitivity to initial conditions it suffices to choose $\varepsilon = \frac{d_{\text{min}}}{2}>0$, and we necessarily have $y_0 = x_0$ for all $y_0\in B(x_0,\varepsilon)$. Thus, for all $n \geq 0$, $T^n(x_0) = T^n(y_0)$ and therefore 
$$ d(T^n(x_0), T^n(y_0)) = 0, \ \forall n\geq 0.
$$
}
\end{rem}

\begin{definition}

A dynamical system $(\Omega, T)$ is said to be topologically \textbf{transitive} if for all non-empty open sets $ U, V $ in $ \Omega $, there exists $ n \geq 0 $ such that
$$
T^{-n}(U) \cap V \neq \emptyset.
$$

\end{definition}

This means that, if the dynamical system $(\Omega,T)$  is transitive, any open set $U$ can be transported by the iterations of the transformation $T$ sufficiently close to any other open set $V$ after some rank $n_0$. 

\begin{prop}
If the dynamical system $(\Omega,T)$ is chaotic with respect to $\nu$, then it is topologically transitive.
\end{prop}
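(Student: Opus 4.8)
The plan is to exploit the defining feature of chaos in this paper, namely that $\nu$-almost every point has a dense forward orbit, and to convert a single dense orbit into the transitivity condition $T^{-n}(U)\cap V\neq\emptyset$. Since $\nu$ is a nonzero Radon measure, the set of points whose orbit is dense has full measure and is therefore non-empty; I fix one such $x_0$, so that $\{T^k(x_0):k\in\mathbb{N}\}$ is dense in $\Omega$. Let $U,V$ be arbitrary non-empty open sets. By density there is an index $p$ with $T^p(x_0)\in V$; write $y:=T^p(x_0)\in V$. The goal is then to produce $n\geq 0$ with $T^n(y)=T^{p+n}(x_0)\in U$, for then $y\in V\cap T^{-n}(U)$ and we are done.

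The crux is to guarantee that the orbit of $x_0$ meets $U$ at some time $q\geq p$, i.e. that the dense orbit visits $U$ \emph{after} the prescribed moment $p$ rather than only before it; equivalently, I must show the tail orbit $\{T^k(x_0):k\geq p\}$ is still dense. I expect this to be the main obstacle, since density of the full orbit a priori guarantees only a single visit to $U$. I would resolve it by observing that $\Omega$ carries no isolated points, a consequence of sensitivity to initial conditions (the preceding Proposition), which rules out the finite or discrete situation described in the earlier Remark. Deleting the finite initial segment $T^0(x_0),\dots,T^{p-1}(x_0)$ from a dense set in a space without isolated points leaves a dense set, so this tail must meet $U$, furnishing $q\geq p$ with $T^q(x_0)\in U$.

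Finally, setting $n:=q-p\geq 0$ gives $T^n(y)=T^q(x_0)\in U$ with $y\in V$, hence $T^{-n}(U)\cap V\neq\emptyset$. Since $U$ and $V$ were arbitrary non-empty open sets, $(\Omega,T)$ is topologically transitive, which is the claim.

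An alternative route, sidestepping the isolated-point discussion, would be to locate a dense-orbit point directly inside $V$: when $\nu(V)>0$, the full-measure set of dense orbits meets $V$, and the orbit of any such point immediately reaches $U$, giving the conclusion at once. The limitation is that this requires $\nu$ to charge $V$, which need not hold for every open set, and this is precisely why I would adopt the topological tail argument as the primary strategy and reserve the measure-theoretic shortcut for the case $\nu(V)>0$.
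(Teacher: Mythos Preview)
Your ``alternative route'' is precisely the paper's proof: the authors pick a point $v\in V$ with dense orbit and send it into $U$, writing simply ``for $\nu$-almost every point $v\in V$, there exists $n_0$ such that $T^{n_0}(v)\in U$'' without addressing whether the full-measure set of dense-orbit points actually meets $V$. So the caveat you raise against your own alternative is exactly the gap in the paper's argument, and your instinct to look for a purely topological repair is well placed.

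However, the repair you propose also has a gap, at the very step you flag as the crux. Your claim that $\Omega$ has no isolated points rests on the preceding proposition about sensitivity, but sensitivity there is only asserted for $\nu$-almost every $x_0$; isolated points of $\nu$-measure zero are not excluded. Without perfectness the proposition can genuinely fail: take $\Omega=\{0\}\cup\{1/n:n\geq 1\}$ with the usual metric, $T(0)=0$, $T(1/n)=1/(n+1)$, and $\nu=\delta_1$. The orbit of $1$ is dense, so $(\Omega,T)$ is chaotic with respect to $\nu$; yet with $U=\{1\}$ and $V=\{1/2\}$ one has $T^{-n}(U)\cap V=\emptyset$ for every $n\geq 0$, so transitivity fails. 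Thus neither your tail argument nor the paper's shortcut can be completed in the stated generality. If one adds the hypothesis that $\Omega$ is perfect, your tail argument goes through cleanly; if instead $\operatorname{supp}\nu=\Omega$, then every non-empty open $V$ has $\nu(V)>0$ and the paper's route (your alternative) works directly.
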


\begin{proof}
 Let consider two non-empty open sets $ U $ and $ V $ in $ \Omega $. Since the trajectories are $\nu$-almost everywhere dense, for $\nu$-almost every point $ v \in V $, there exists an $ n_0 \in \mathbb{N} $ such that $ T^{n_0}(v) \in U $. So there exists a rank $ n_0 $ such that $ T^{-n_0}(U) \cap V \neq \emptyset $. This shows that the dynamical system $(\Omega,T)$ is transitive.
\end{proof}

\subsection{Invariant Measure}

\begin{definition}
Let $(\Omega,T)$ be a dynamical system. An \textbf{invariant measure} $ \mu $ for $(\Omega, T)$ is a probability measure on $( \Omega, \mathcal{F} )$ such that
$$
\mu(T^{-1}(A)) = \mu(A),
$$ 
for all $ A \in \mathcal{F} $.
\end{definition}

The definition means that the probability of occurrence of an event $ A $ does not change under the application $T$, that is, the distribution of points remains the same under the dynamic evolution $T$.

\begin{example}

\rm{
 The Gauss-Kuzmin-Wirsing transformation (often simply called the Gauss transformation) is a transformation on the set $\Omega=[0, 1]$ defined by 
$$
T_G(x): =\left\{
  \begin{array}{ll}
     \frac{1}{x} - \lfloor \frac{1}{x} \rfloor, & \text{if } x\in(0,1] \\
    0 & \text{if } x=0
  \end{array}
  \right.
$$
where $\lfloor \cdot \rfloor$ denotes the integer part function.  This transformation is closely related to continued fractions and has interesting dynamical properties \cite{Wi}. Let's examine its chaotic behavior. For a real number $x \in (0, 1]$, we can write its continued fraction as 
$$
x = \frac{1}{a_1 + \frac{1}{a_2 + \frac{1}{a_3 + \dots}}},
$$
where $a_1, a_2, \dots$ are positive integers. The transformation $T_G$ has an invariant measure $\mu$ given by
$$
\mu_G(A) = \int_A \frac{1}{\ln 2} \cdot \frac{1}{1 + x} \, dx, \ \forall  A \in \mathcal{F}.
$$
To show that this measure is $T_G$-invariant, we use the following result: if $ T : \Omega \subset \mathbb{R}^d \to \Omega $ is a local $ \mathcal{C}^1 $-diffeomorphism, and $ \rho $ is a continuous function, then the measure $\mu$ with density $\rho$ with respect to the Lebesgue measure is $ T $-invariant if and only if it satisfies the Frobenius-Perron functional equation
$$
\sum_{x \in f^{-1}(y)} \frac{\rho(x)}{|T'(x)|} = \rho(y).
$$
For this, let's start by observing that each $ y $ has exactly one preimage $x_k$ in each interval $ \left( \frac{1}{1+p}, \frac{1}{p} \right] $, given by
$$
T(x_p) = \frac{1}{x_p} - p = y \quad \Leftrightarrow \quad x_p = \frac{1}{y + p}
$$
Moreover, notice that $ T'_G(x) = -\frac{1}{x^2} $ on each interval $ \left( \frac{1}{1+p}, \frac{1}{p} \right] $. 
\begin{align*}
 \sum_{x \in T_G^{-1}(y)} \frac{\rho_G(x)}{|T'_G(x)|} &=\sum_{p=1}^{\infty} \frac{1}{\ln 2} \cdot \frac{ x_p^2}{1 + x_p}\\
 &=\frac{1}{\ln 2} \sum_{p=1}^{\infty} \frac{1}{(y + p)(y + p + 1)} \\
 &= \rho_G(y).
\end{align*}
This shows that the measure $\mu$ with density $\rho_G$ is invariant under the Gauss transformation.}

\end{example} 

\begin{example}

\rm{Let $\Omega=[0,1]$. The logistic map is defined by the transformation $T_L$ on $\Omega$ as follows
$$
T_L(x) = \lambda x (1 - x),
$$
where $ \lambda $ is a real parameter in the interval $[0,4]$ and $ x \in \Omega $. This transformation acts on a point $ x_n $ to produce the next in the sequence 
$$
x_{n+1} = T_L(x_n) = \lambda x_n (1 - x_n).
$$
This iteration generates an orbit $ (x_n) $, which depends on the initial condition $ x_0 $ and the properties of $ T_L $. For $ \lambda= 4 $, the transformation $ T_L $ is associated with an absolutely continuous invariant measure $\mu_L$ with respect to the Lebesgue measure, whose invariant density is given by 
$$
\rho_L(x) = \frac{1}{\pi \sqrt{x(1 - x)}}\mathrm{1}_{(0,1)}(x), \ \forall  x\in \mathbb{R}.
$$
Again to show that the measure $\mu_L$ is invariant, we prove tha $\rho_L$ qatisfies the Frobenius-Perron functional equation. The preimages under the logistic transformation $T_L$, we solve the equation $ T_L(x) = y $, i.e., 
$$
4x(1 - x) = y.
$$
The solutions  of this equation are
$$
x_1 = \frac{1 + \sqrt{1 - y}}{2} \quad \text{and} \quad x_2 = \frac{1 - \sqrt{1 - y}}{2}.
$$
Thus, the preimage of $ y $ under $ T_L $ is $ T_L^{-1}(y) = \{x_1, x_2\}$. Knowing that $\rho_L(x_1)=\rho_L(x_2)$ and
$$
|T'_L(x_1)| = |T'_L(x_2)| = 4\sqrt{1 - y},
$$
we obtain 
\begin{align*}
 \sum_{x \in T_L^{-1}(y)} \frac{\rho_L(x)}{|T'_L(x)|} &=\frac{\rho_L(x_1) }{2\sqrt{1 - y}}= \rho_L(y).
\end{align*} 
Thus, $\rho_L$ satisfies the Frobenius-Perron functional equation.
  }  
\end{example}

\begin{definition}
Let $(\Omega,T)$ be a dynamical system and $\mu$ a probability measure on $\Omega$. We say that the measure $ T $ is \textbf{$ \mu $-ergodic} if, for every measurable set $ A \in \mathcal{F}$, invariant under $T$, that is, $ T^{-1}(A) = A $, we have
$$
\mu(A) = 0 \quad \text{or} \quad \mu(A) = 1.
$$
\end{definition}

In other words, there are no invariant subsets under $T$ that have a measure $\mu$ strictly between 0 and 1.\\

Now, we are ready to prove Theorem 2.2,

\begin{proof} (of Theorem 2.2.) We proceed to demonstrate this theorem in 4 steps:
\begin{itemize}
\item \emph{Step 1.} Since $\Omega$ is metric and compact, it is separable, meaning there exists a dense sequence $(x_n)$ in $\Omega$. Let $\Omega_0 = \{x_n : n \in \mathbb{N} \}$ and define the map 
\begin{align*}
    T_0 :& \ \Omega_0 \to \Omega_0 \subset \Omega \\
        & \ x_n \mapsto x_{n+1}
\end{align*}
for all $n \geq 0$. It is clear that $T_0$ is continuous on  $\Omega_0$ by construction. Since $\Omega$ is compact and metric, and $ T_0$ is defined on a dense set, by the extension theorem, there exists a continuous map $T : \Omega \to \Omega$ such that $T = T_0$ on $\Omega_0$.
Thus, we have constructed a continuous map $T : \Omega \to \Omega $ and a point $x_0 \in \Omega$ such that the sequence $\left( T^n(x_0) \right)_{n \geq 0}$ is dense in $\Omega$.

\item \emph{Step 2.} We now construct a probability measure $\mu$ on  $(\Omega, \mathcal{F})$ that adequately describes the distribution of the elements of $\Omega$. Consider a nonzero Radon measure $\nu$ on $\Omega$.
For $A \in \mathcal{F}$, let the measure $\mu_0$ be defined by 
$$
\mu_0(A) = \frac{1}{\nu(\Omega)} \nu(A).
$$
Next, define the sequence of measures  $(\mu_n)$ by
$$
\mu_n(A) = \mu_{n-1}(T^{-1}(A)),
$$
for all $A \in \mathcal{F}$. It is clear that the sequence $(\mu_n)$ consists of probability measures on $\Omega$. Since $T$ is a continuous transformation and $\Omega$ is compact, by Prokhorov's theorem \cite{Pr}, there exists a subsequence of measures $(\mu_{k_n})$ of$(\mu_n)$ that converges weakly to a limit measure $\mu$, i.e., for any continuous function $f$ on $\Omega$, we have 
$$
\lim_{n \to \infty} \int_\Omega f \, d\mu_{k_n} = \int_\Omega f \, d\mu.
$$
In particular, $\mu(\Omega) = 1$.

We now show that the limit measure $\mu$ is invariant under $T$. Indeed, for any continuous function $f$ on $\Omega$, we have
\begin{align*}
    \int_\Omega f \, d\mu &= \lim_{n \to \infty} \int_\Omega f \, d\mu_{k_n} \\
    &= \lim_{n \to \infty} \int_\Omega f \circ T \, d\mu_{k_{n-1}} \\
    &= \int_\Omega f \circ T \, d\mu.
\end{align*}
Thus, for any measurable set $A \in \mathcal{F}$, we have
$$
\mu(T^{-1}(A)) = \mu(A),
$$
which shows that $\mu$ is invariant.

\item \emph{Step 3.} We now show that $T$ is $\mu$-ergodic. Suppose, for the sake of contradiction, that there exists a set $A \in \mathcal{F}$, invariant under $T$, such that $0 < \mu(A) < 1$. This also implies $0 < \mu(\Omega \setminus A) < 1$. Two cases arise :
\begin{itemize}
    \item[---] If $x_0 \in A$, since $T^{-1}(A) = A$, the iterates  $T^n(x_0)$ remain in $A$, so there does not exist any $n$ such that $ T^n(x_0) \in \Omega \setminus A$, which is a contradiction because the sequence is dense in $\Omega$.
    \item[---] If $x_0 \notin A$, since $T^{-1}(A) = A$, we have $T^{-1}(\Omega \setminus A) = \Omega \setminus A$, so the iterates  $T^n(x_0)$ remain in $\Omega \setminus A$, which is a contradiction by the same reasoning.
\end{itemize}
Thus, $\mu(A) = 0$ or $\mu(A) = 1$. Therefore, $T$ is $\mu$-ergodic.

Consequently, according to Birkhoff's theorem \cite{BI}, for any function $f \in L^1(\Omega, \mu)$, the time averages
$$
\frac{1}{N} \sum_{n=0}^{N-1} f(T^n(x_0))
$$
converge as $N \to \infty$ towards the spatial average $E_{\mu}(f) = \int_\Omega f \, d\mu$ for almost every $x_0 \in \Omega$ with respect to $\mu$.
In particular, for $\mu$-almost every point $x_0 \in \Omega$,
$$
\lim_{N \to \infty} \frac{1}{N} \sum_{n=0}^{N-1} \mathrm{1}_A(T^n(x_0)) = \mu(A),
$$
for all $A \in \mathcal{F}$. This implies, in particular, that if $U \subset \Omega$ is an open set, there exists $n \in \mathbb{N}$ such that $T^n(x_0) \in U$. Thus, the dynamics are dense $\mu$-almost everywhere in $\Omega$. As a result, the dynamical system $(\Omega, T)$ is chaotic with respect to $\mu$, and therefore there exists a chaotic sequence $(u_n)$ in $\Omega$ such that every point in $\Omega$ is the limit of a convergent subsequence of the sequence $(u_n)$, since it is dense in $\Omega$.
It follows that every point (which is an elementary event) $\omega$ in the probability space $(\Omega, \mathcal{F}, \mu)$ is the limit of a subsequence of the chaotic sequence $(u_n)$.

\item \emph{Step 4.} To complete the proof, we now show that any random phenomenon on $\Omega$ can be approximated by a chaotic phenomenon on $\Omega$. Consider a random experiment \( \mathcal{A} \) whose universe is \( \Omega \) and whose sigma-algebra is \( \mathcal{G} \). 

Since the $\sigma$-algebra $\mathcal{G}$ on $\Omega$ is countably generated, there exists a countable family of sets $\{A_n\}_{n \geq 1} \subset \mathcal{G}$ such that
$$
\mathcal{G} = \sigma\left(\{A_n, \ n \geq 1\}\right).
$$
Let $X_0$ be the function defined by
\begin{align*}
    X_0 \ :\ & \Omega \to \{0,1\}^{\mathbb{N}}, \\
    & \omega \mapsto \left( \mathbf{1}_{A_1}(\omega), \mathbf{1}_{A_2}(\omega), \mathbf{1}_{A_3}(\omega), \dots \right).
\end{align*}
For each $n$, the function $\omega \mapsto \mathbf{1}_{A_n}(\omega)$ is measurable and then $X_0$ is a random variable from $(\Omega, \mathcal{B}(\Omega))$ into $(\{0,1\}^{\mathbb{N}}, \mathcal{B}(\{0,1\}^{\mathbb{N}}))$. Moreover, we have $\sigma(X_0) = \mathcal{G}$. Indeed, by definition, the $\sigma$-algebra $\sigma(X_0)$ is the smallest $\sigma$-algebra on $\Omega$ for which $X_0$ is measurable, that is 
$$
\sigma(X_0) = \{X_0^{-1}(B) : B \in \mathcal{B}(\{0,1\}^{\mathbb{N}})\}.
$$
On the other hand, for each $n$, we have
$$
\{\omega \in \Omega : \mathbf{1}_{A_n}(\omega) = 1\} = A_n,
$$
which is obtained as the preimage under $X_0$ of a cylinder set in $\{0,1\}^{\mathbb{N}}$.

Since the cylinders (sets fixing one or finitely many coordinates) generate the Borel $\sigma$-algebra $\mathcal{B}(\{0,1\}^{\mathbb{N}})$, we deduce that all these $A_n$ are in $\sigma(X_0)$. 

As $\sigma(X_0)$ is a $\sigma$-algebra containing all the $A_n$, and since $\mathcal{G} = \sigma(\{A_n, \ n\geq1\})$ is by definition the smallest $\sigma$-algebra containing these sets, it follows that 
$$
\mathcal{G} \subset \sigma(X_0).
$$ 
Conversely, by the very construction of $X_0$, all the information in $\sigma(X_0)$ is expressed in terms of the $A_n$. Thus, we also obtain 
$$
\sigma(X_0) \subset \mathcal{G}.
$$

Otherwise, since $\Omega$ is a compact metric space and similarly, $\{0,1\}^{\mathbb{N}}$ is a compact metrizable space, hence there exists a Borel isomorphism $\varphi : \{0,1\}^{\mathbb{N}} \to \Omega$, that is, a measurable bijection whose inverse is also measurable.

Define  $$
X = \varphi \circ X_0,
$$
so that we obtain a random variable
$$
X : (\Omega, \mathcal{B}(\Omega)) \to (\Omega, \mathcal{G}),
$$
which is measurable since both $X_0$ and $\varphi$ are measurable. Moreover, the $\sigma$-algebra generated by $X$ is given by
$$
\sigma(X) = \sigma(\varphi \circ X_0) = \sigma(X_0) = \mathcal{G},
$$
because the composition with a Borel isomorphism does not change the generated $\sigma$-algebra.

We can now define a probability measure $P$ that assigns to each event $A \in \mathcal{G}$ its probability $P(A)$. The probability $P$ is defined as the image measure $\mu$ by $X$
$$
P(A) = \mu(X^{-1}(A)), \quad \text{for all } A \in \mathcal{G}.
$$
Which completes the proof.

\end{itemize}

\end{proof}

\begin{rem}
    \rm{The invariant measure constructed in the precedent proof can be interpreted as  the natural probability law of the elements of the universe $\Omega$, associated with the random experiment where an element of $\Omega$ is randomly selected.}
\end{rem}

Now we can get the proof of the Corollary 2.3 :

\begin{proof} 
Consider a nonzero Radon measure $\nu$ on $\Omega$ such that the trajectories of $T$ are dense $\nu$-almost everywhere in $\Omega$. Let $x_0 \in \Omega$ such that the sequence $(T^n x_0)$ is dense in $\Omega$, then we can extract a convergent subsequence of $(T^n x_0)$. We define the unpredictable points $\omega \in \Omega$ as follows:
\begin{center}
    "$\omega$ is unpredictable if and only if it is the limit of a convergent subsequence of the sequence $(T^n x_0)$".
\end{center} 

By following the same steps 2 and 3 from the proof of Theorem 2.2., we show that there exists an invariant measure $\mu$ associated with the dynamical system $(\Omega, T)$ such that $T$ is $\mu$-ergodic. As a result, the space $(\Omega, \mathcal{F}, \mu)$ is probabilized and every unpredictable point of $\Omega$ is the limit of a subsequence of the chaotic sequence $(u_n) = (T^n x_0)$. Since the sequence $(u_n)$ is dense in $\Omega$, all points in $\Omega$ are unpredictable. Hence, every point of $\Omega$ is the limit of a subsequence of the chaotic sequence $(u_n) = (T^n x_0)$. This completes the proof.

\end{proof}

\begin{cor}
Let $X$ be a random vector defined on a probability space with values in $\mathbb{N}^d$, $\mathbb{Z}^d$, or $\mathbb{R}^d$. Then, any random phenomenon that can be modeled by $X$ can be approximated by a chaotic phenomenon.
\end{cor}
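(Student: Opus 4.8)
The plan is to reduce to Theorem 2.2 by compactifying the state space. Write $\Omega$ for whichever of $\mathbb{N}^d$, $\mathbb{Z}^d$, $\mathbb{R}^d$ is the range of $X$, and let $\mu_X = P \circ X^{-1}$ be its distribution, a Borel probability measure on $(\Omega, \mathcal{B}(\Omega))$; this is the random phenomenon to be approximated. The only hypothesis of Theorem 2.2 that fails here is compactness, since each of these spaces is locally compact but unbounded. The remedy is to embed $\Omega$ as a Borel subset of a compact metric space and to invoke the theorem there.

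First I would take the one-point (Alexandroff) compactification $\Omega^{*} = \Omega \cup \{\infty\}$. Each of $\mathbb{N}^d$, $\mathbb{Z}^d$, $\mathbb{R}^d$ is locally compact, Hausdorff and second countable, so $\Omega^{*}$ is compact, Hausdorff and---crucially---metrizable (for $\mathbb{R}^d$ one has $\Omega^{*} \cong S^d$, and for $\mathbb{N}^d$ or $\mathbb{Z}^d$ one obtains a countable compact metric space). The inclusion $\Omega \hookrightarrow \Omega^{*}$ realizes $\Omega$ as an open, hence Borel, subset of $\Omega^{*}$, and the trace relation $\mathcal{B}(\Omega) = \{B \cap \Omega : B \in \mathcal{B}(\Omega^{*})\}$ shows that $\mathcal{B}(\Omega^{*})$ is countably generated, being the Borel $\sigma$-algebra of a compact metric (in particular Polish) space.

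Since $X$ takes its values in $\Omega$, the assignment $\bar{\mu}(B) = \mu_X(B \cap \Omega)$ for $B \in \mathcal{B}(\Omega^{*})$ defines a Borel probability measure on $\Omega^{*}$ with $\bar{\mu}(\{\infty\}) = 0$, so the adjoined point carries no mass and the whole probabilistic content of $X$ is preserved inside $\Omega \subset \Omega^{*}$. Applying Theorem 2.2 to the compact metric space $\Omega^{*}$ with its countably generated $\sigma$-algebra then yields a continuous transformation $T : \Omega^{*} \to \Omega^{*}$ and an invariant, ergodic probability measure for which the chaotic sequence $(u_n) = (T^n(x_0))$ is dense; hence every elementary event of $\Omega^{*}$, in particular every point of $\Omega$, is the limit of a convergent subsequence of $(u_n)$. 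Composing with a Borel isomorphism exactly as in Step 4 of the proof of Theorem 2.2 then realizes the random phenomenon modeled by $X$ as one approximated by this chaotic phenomenon.

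The main obstacle is precisely the loss of compactness, and the point requiring care is that the one-point compactification of these particular spaces is metrizable---a consequence of their second countability---so that the hypotheses of Theorem 2.2 are genuinely satisfied. Once that is secured, the vanishing of $\bar{\mu}$ on $\{\infty\}$ guarantees that compactifying does not distort the random phenomenon carried by $X$.
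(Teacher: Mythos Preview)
Your compactification argument is sound at the level of rigor the paper works at, but it is a genuinely different route from the one taken in the paper. The paper does \emph{not} compactify the range of $X$; instead it transports the law of $X$ back to the unit cube via the probability integral transform. For $d=1$ it writes $X \stackrel{d}{=} F_X^{-1}(U)$ with $U$ uniform on $[0,1]$ and applies Theorem~2.2 to $\Omega=[0,1]$; for $d\geq 2$ it invokes Sklar's theorem to express $F_X$ through a copula, obtains $X \stackrel{d}{=} \bigl(F_{X_1}^{-1}(U_1),\dots,F_{X_d}^{-1}(U_d)\bigr)$ with $(U_1,\dots,U_d)$ having uniform marginals on $[0,1]^d$, and applies Theorem~2.2 to $\Omega=[0,1]^d$. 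What this buys is an explicit, simulation-ready link between the chaotic sequence on the cube and samples from the target law (which is exactly how the paper exploits the result in its logistic-map example). Your Alexandroff approach, by contrast, is more uniform---no case split on $d$, and it would extend verbatim to any locally compact second countable range space---but it is less constructive, and the chaotic orbit now lives in $\Omega^{*}$ and will accumulate near $\infty$. One small point: your extended measure $\bar\mu$ does not actually feed into Theorem~2.2, which manufactures its own invariant measure in Steps~2--3; the only input Theorem~2.2 takes is the compact metric space and a countably generated $\sigma$-algebra, so the sentence about $\bar\mu(\{\infty\})=0$ ``not distorting'' the phenomenon is more heuristic than operative.
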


\begin{proof} 

Let $X$ be a random vector with values in $\mathbb{N}^d$, $\mathbb{Z}^d$, or $\mathbb{R}^d$ with  cumulative distribution function $F_X$. 

\begin{itemize}
    \item If $d = 1$, then we have the following equality in law $X = F_X^{-1}(U)$, where $U$ is a random variable uniformly distributed on $[0,1]$ and $F_X^{-1}$ is the generalized inverse of $F_X$ (or quantile function) defined by
    $$
    F^{-1}(u) = \inf\{y \in \mathbb{R} \mid F(y) \geq u\}, \ \forall u\in[0,1].
    $$
    The result follows from the previous theorem by applying it to $\Omega = [0, 1]$.
    
    \item If $d \geq 2$, according to the Sklar's theorem, the multivariate cumulative distribution function $F_X$ of the random vector $X = (X_1, \dots, X_d)$ can be written: for all $(x_1,x_2,...,x_d)\in\mathbb{R}^d$,
    $$
    F_X(x_1, \dots, x_d) = C\big(F_{X_1}(x_1), \dots, F_{X_d}(x_d)\big),
    $$
    where $F_{X_i}$ are the marginal cumulative distribution functions of $X_i$ and $C$ is a copula, capturing the dependence structure between the $X_i$, that is, a multivariate cumulative distribution function of a random vector $U=(U_1,...,U_d)$ uniformly distributed on $[0,1]^d$ (i.e., whose margins are uniform on $[0,1]$). In this case, we have the following equality in law $X = (F_{X_1}^{-1}(U_1),..., F_{X_d}^{-1}(U_d))$. Now the result follows directly from the previous theorem by applying it to $\Omega = [0, 1]^d$.
\end{itemize}

\end{proof}

\begin{rem}
    \rm{   A dynamical system $ (\Omega, T) $ is chaotic in the sense of Devaney \cite{De} if: $ T $ is transitive;   $ T $ is sensitive to initial conditions; and the periodic points $ x_0 \in \Omega $ of $ T $ (i.e., there exists $ n_0 $ such that $ T^{n_0}(x_0) = x_0 $) are dense in $ \Omega $. The density of the set of periodic points ensures a structural richness in the system. This condition, combined with topological transitivity, shows that chaos is not simply a "total disorganization," but that there is some underlying coherence. It is this combination of local regularity and global complexity that characterizes chaos in the sense of Devaney.}
\end{rem}

The following result shows that the set of periodic points is negligible with respect to the invariant measure, particularly when $ \Omega $ is assumed to be compact. Thus, the chaos defined in Definition 2.1 constitutes a generalization of that proposed by Devaney \cite{De}.

\begin{prop}
    Under the conditions of Theorem 2.2, the periodic points of $T$ form a negligible set with respect to the invariant measure.
\end{prop}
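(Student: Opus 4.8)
The plan is to reduce the statement to the single fact, already obtained in Step 3 of the proof of Theorem 2.2, that the constructed invariant measure $\mu$ makes $\mu$-almost every orbit dense in $\Omega$, and to combine it with the elementary observation that a periodic orbit is finite, whereas a finite set cannot be dense in the infinite space $\Omega$. Thus the periodic points are forced into the $\mu$-null set of points whose orbits fail to be dense.

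First I would fix notation and dispose of measurability. Write the set of periodic points as $P = \bigcup_{n \geq 1} \mathrm{Fix}(T^n)$, where $\mathrm{Fix}(T^n) = \{x \in \Omega : T^n(x) = x\}$. Since $T$ is continuous, each function $x \mapsto d(T^n(x), x)$ is continuous, so $\mathrm{Fix}(T^n) = \{x : d(T^n(x),x) = 0\}$ is closed; hence $P$ is an $F_\sigma$ set, in particular $P \in \mathcal{F}$ and $\mu(P)$ is well defined. Next, recall from Step 3 the full-measure set $G = \{x \in \Omega : (T^k(x))_{k \geq 0} \text{ is dense in } \Omega\}$, for which $\mu(G) = 1$. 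I then claim $P \cap G = \emptyset$: if $x \in P$, say $T^n(x) = x$ with $n \geq 1$, its forward orbit is the finite set $\{x, T(x), \dots, T^{n-1}(x)\}$, which is closed and, because $\Omega$ is infinite, a proper subset of $\Omega$, hence not dense. So no periodic point lies in $G$, that is $P \subseteq \Omega \setminus G$.

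The conclusion then follows by monotonicity and the fact that $\mu(G)=1$: we get $\mu(P) \leq \mu(\Omega \setminus G) = 1 - \mu(G) = 0$, so the periodic points form a $\mu$-negligible set. Combined with the Remark recalling Devaney's definition, this justifies the claim that Definition 2.1 generalizes Devaney's chaos, since the periodic-point component is invisible to the invariant measure.

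The main obstacle is the delicate point hidden in the cited fact, namely that $\mu$-almost every orbit is dense in \emph{all} of $\Omega$ and not merely in $\mathrm{supp}(\mu)$; this implicitly requires every nonempty open set to carry positive $\mu$-measure. I would either invoke Step 3 directly, since the proposition is stated under the conditions of Theorem 2.2, or reprove it self-containedly: choose a countable base $\{U_k\}$ of $\Omega$ (separability), apply Birkhoff's theorem to $\mathbf{1}_{U_k}$ for each $k$ with $\mu(U_k) > 0$ to produce a full-measure set whose orbits visit each such $U_k$, and conclude density provided all nonempty open sets have positive mass. An alternative ergodicity-based route is available but runs into the same difficulty: each $\mathrm{Fix}(T^n)$ is forward-invariant, so $\mathrm{Fix}(T^n) \subseteq T^{-1}(\mathrm{Fix}(T^n))$ with equal $\mu$-measure by invariance, hence invariant modulo $\mu$-null sets, and $\mu$-ergodicity gives $\mu(\mathrm{Fix}(T^n)) \in \{0,1\}$; ruling out the value $1$ (which would make $\mu$-a.e.\ orbit finite) again appeals to the dense-orbit fact, after which countable subadditivity over $n$ yields $\mu(P)=0$.
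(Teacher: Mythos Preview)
Your proposal is correct, and your primary argument is more direct than the paper's. You observe that a periodic orbit is finite and hence not dense in the infinite space $\Omega$, so $P \subseteq \Omega \setminus G$ where $G$ is the full-$\mu$-measure set of points with dense orbit produced in Step~3, giving $\mu(P)=0$ at once. The paper instead follows the ergodicity route you sketch as an alternative: it decomposes $P = \bigcup_n P_n$, argues that each $P_n$ is $T$-invariant, applies ergodicity to get $\mu(P_n)\in\{0,1\}$, and rules out the value $1$ via the same finite-orbit-versus-density observation. Your route is shorter and sidesteps two dubious steps in the paper's version---the unsupported claim that each $P_n$ is discrete (hence finite), and the inclusion $T^{-1}(P_n)\subseteq P_n$, which does not follow from $T^{n+1}(x)=T(x)$; your alternative sketch correctly uses the opposite inclusion $P_n\subseteq T^{-1}(P_n)$ together with measure invariance to obtain invariance modulo null sets. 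The caveat you flag about density in all of $\Omega$ versus in $\mathrm{supp}(\mu)$ is a genuine issue, but it lies in Step~3 of Theorem~2.2 itself, so invoking that step as stated is consistent with the proposition's hypotheses.
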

\begin{proof}Let $\mu$ be the invariant measure constructed on $\Omega$ in the proof of the previous theorem. The transformation $ T $ is $ \mu $-ergodic by construction. Let $ P $ be the set of periodic points of $ T $.
We have $$P = \bigcup_{n=1}^{\infty} P_n,$$ where $P_n =\left\{ x \in \Omega : T^n(x) = x \right\}$ denotes the periodic points of period $n$. Since
$$
\mu(P) \leq \sum_{n=1}^{\infty} \mu(P_n), 
$$ it suffices to show that $\mu(P_n)=0$ for all $n$.
For each $ n $, $ P_n $ is discrete (it consists of fixed points or discrete cycles), because each periodic point corresponds to a solution of an iteration equation that takes a finite number of values in a compact space. Since $ T $ is continuous, the application $ T^n $ is also continuous. The set $ P_n $ is therefore the set of fixed points of the continuous application $ T^n $. According to the fixed-point theorem, $ P_n $ is closed in $ \Omega $, and consequently, $ P_n $ is finite in $ \Omega $.
On the other hand, let $ x \in T^{-1}(P_n) $ so $ T(x) \in P_n $, that is, $ T(x) $ is a periodic point of $ T $, meaning $ T^{n+1}(x) = T(x) $. This shows that $ x $ is also a periodic point of $ T $, and therefore $ x \in P_n $. Thus, $ T^{-1}(P_n) \subseteq P_n $. This shows that $ P_n $ is invariant under $ T $.
Since $T$ is $\mu$-ergodic, then $\mu(P_n)=0$ or $\mu(P_n)=1$. If $\mu(P_n)=1$, the measure $\mu$ is supported by the periodic points of period $n$, but the trajectories of these points cannot satisfy the density criterion of their trajectories in $\Omega$ since they form finite cycles.
which is contradictory to the fact that $(\Omega,T)$ is chaotic. As a result, $\mu(P_n) = 0$,  which completes the proof.\end{proof}

\section{Illustrative examples}

The proposition 2.13  reveals that the topological density of periodic points constitutes an excessively strong condition. Although the set of periodic points is dense in $\Omega$, it has measure zero under the invariant measure $\mu$. ccording to $\mu$ is precisely zero, underscoring the exceptional nature of these points from a measure-theoretic perspective. To illustrate this finding, we will examine several well-established examples from the literature.

\subsection{The logistic model}

The logistic model is both a simple and powerful example of a deterministic system capable of producing chaotic behavior, while also serving as a generator of pseudo-random sequences. This phenomenon can be understood through the fundamental properties of dynamical systems, and its rigorous justification relies on Corollary 2.11, which demonstrates that each elementary event of $\Omega$ is the limit of a subsequence of the chaotic logistic sequence $(x_n)$.\\
Let $\Omega=[0,1]$.
The transformation associated with the logistic sequence is the following quadratic function
$$
T_L(x) = \lambda x (1 - x),
$$
where $ \lambda $ is a real parameter in the interval $[0,4]$ and $ x \in \Omega $. This transformation acts on a point $ x_n $ to produce the next one in the sequence
$$
x_{n+1} = T_L(x_n) = \lambda x_n (1 - x_n).
$$
This iteration generates an orbit $ (x_n) $, which depends on the initial condition $ x_0 $ and the properties of $ T_L $. For $ \lambda= 4 $, the transformation $ T_L(x) = 4x(1 - x) $ is well-studied on $ \Omega $ (see \cite{Fe}, \cite{Ma}, and \cite{Sc}) and associated with the invariant measure $\mu_L$ which is absolutely continuous  with respect to the Lebesgue measure, whose invariant density is given by
$$
\rho_L(x) = \frac{1}{\pi \sqrt{x(1 - x)}}\mathrm{1}_{(0,1)}(x), \ \forall  x\in \mathbb{R}.
$$
This density is related to the distribution of points generated by the logistic sequence $(x_n)$, which covers the space $\Omega$ in a chaotic manner, but not uniformly, with a greater concentration of points near the edges $0$ and $1$ (see figure below).

\begin{center}
\includegraphics[scale=.7]{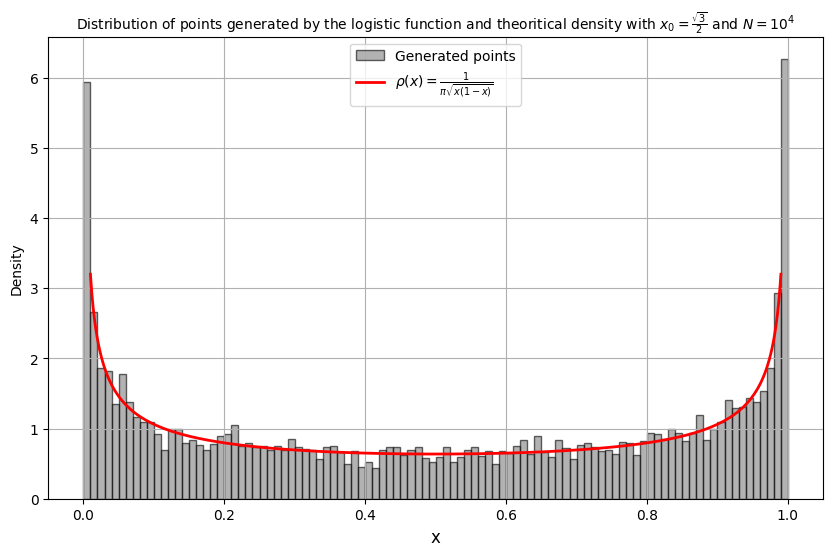}    
\end{center}

The system is chaotic on $ \Omega $ since the sequence $(x_n)$ is dense in $\Omega$ for $\mu_L$-almost every initial point $x_0$ in $\Omega$. The initial point $ x_0 $ must be chosen in the interval $ (0, 1) $ to observe chaotic behavior. This excludes the points $ x_0 = 0 $ and $ x_0 = 1 $ as well as certain specific points that may belong to   periodic orbits (such as fixed points or limit cycles when $ \lambda $ is below the chaotic threshold). However, these points are rare and form a set of (Lebesgue) measure  zero.\\
The sequence $ (x_n) $ is dense in $ \Omega $ for $ \lambda = 4 $, which means that the points generated by the sequence get arbitrarily close to any point in $ \Omega $, including points near the edges of $ \Omega $ where the density $ \rho_L $ is particularly high. Taking into account the invariant measure, each point of $ \Omega $ can be interpreted as an elementary event, and the logistic sequence produces points distributed according to this density. Thus, each point of $ \Omega $ can be approximated as the limit of a convergent subsequence of the chaotic sequence $ (x_n) $.\\
To transform these values into a uniform distribution on $[0,1]$, the cumulative distribution function is used
$$
F(x) = \int_{-\infty}^x \rho_L(t) \, dt, \quad x\in\mathbb{R}.
$$
By applying the following transformation to each value $x_n$ generated by the logistic sequence $u_n = F(x_n),$
we obtain a sequence $(u_n)$ that is uniformly distributed on $[0,1]$.
For a variable $X$ uniformly distributed on $[0,1]$, one can generate a new random variable $Y$ following a distribution with a cumulative distribution function $F_Y(x)$ using the following generalized inverse method
$$
Y = F_Y^{-1}(X),
$$
where $F_Y^{-1}(u)$ is the generalized inverse of $F_Y(x)$, defined as follows
$$
F_Y^{-1}(u) = \inf \{x \in \mathbb{R} : F_Y(x) \geq u\}, \quad u \in [0,1].
$$
To generate values according to the law $Y$, we follow the following steps:
\begin{enumerate}
    \item[\emph{(i)}] Generation of a uniform sequence $(u_n)$ from a sequence of pseudo-random numbers obtained from a transformation of the logistic sequence.
  \item[\emph{(ii)}] Application of the inverse transform: For each $u_n \in [0,1]$, calculate $y_n = F_Y^{-1}(u_n)$.
    \item[\emph{(iii)}]  Use of the sequence $(y_n)$: The $y_n$ follow the law of the random variable $Y$ defined by the cumulative distribution function $F_Y$.
\end{enumerate}
 
Consequently, the machine generation of all standard distributions of random variables, random vectors, as well as stochastic process distributions (by the logistic sequence, for example), relies on a rigorous mathematical result that guarantees their validity.

\subsection{The Gauss-Kuzmin-Wirsing model}

 Let's take  again the Gauss-Kuzmin-Wirsing transformation $T_G$ defined in the example 2.7 on $\Omega=[0,1]$. The dynamical system $(\Omega,T_G)$ is chaotic because the Gauss transformation is ergodic with respect to its invariant measure $\mu_G$ using Lemma 2.1. This measure is absolutely continuous with respect to the Lebesgue measure on $\Omega$ 
$$
\mu_G(A) = \int_A \frac{1}{\ln 2} \cdot \frac{1}{1 + x} \, dx, 
$$
for all  $A \in \mathcal{F}$. A graphical illustration of the points generated by this transformation, along with the associated density curve $\rho_G$, is presented in the following figure:
\begin{center}
\includegraphics[scale=.7]{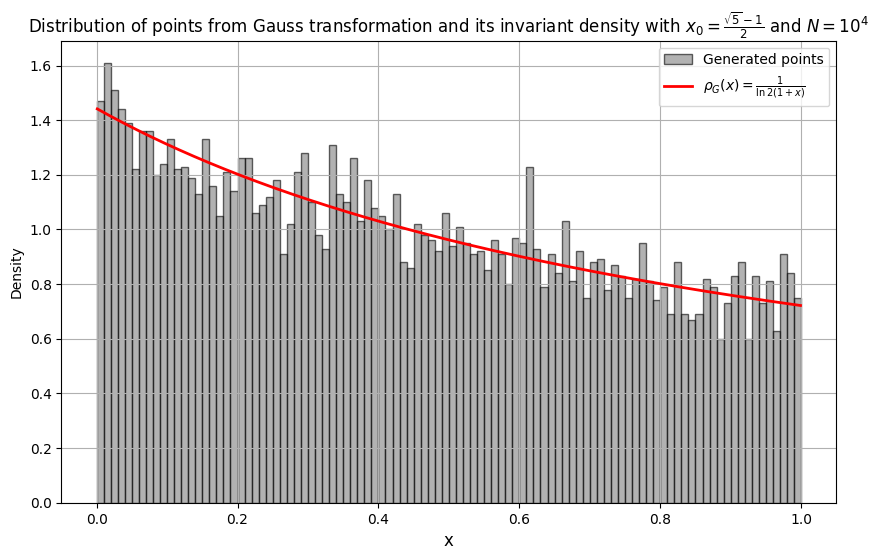}    
\end{center}

To determine the periodic points of the Gauss transformation $ T_G(x) = \frac{1}{x} - \lfloor \frac{1}{x} \rfloor $, it is necessary to solve the equation $ T_G^n(x) = x $, where $ T_G^n $ is the $n$-th iteration of $ T_G $. In particular, the fixed points (period $ n = 1 $) satisfy $ T_G(x) = x $. This gives the equation 
$$
\frac{1}{x} - \left\lfloor \frac{1}{x} \right\rfloor = x.
$$
The point 0 is a fixed point of the Gauss map, thus a trivial case of a periodic point with a period of 1. Let $ k = \lfloor \frac{1}{x} \rfloor $ be an integer, then 
$$ x^2 + kx - 1 = 0.$$

The solutions to this equation are given by 
\[
x = \frac{-k \pm \sqrt{k^2 + 4}}{2}.
\]
However, $x \in (0, 1)$, so we only keep the positive solution
\[
x = \frac{-k + \sqrt{k^2 + 4}}{2}.
\]
For each integer $k \geq 1$, this gives a fixed point. For  $k = 1$, $x = \frac{-1 + \sqrt{5}}{2}$ (the reciprocal of the golden ratio).\\
The set of periodic points of $T_G$ is dense in $\Omega$, and these points correspond to the real numbers in $\Omega$ that have a purely periodic continued fraction expansion. This includes all irrational quadratic numbers, that is, the solutions of quadratic equations with integer coefficients and positive discriminant. The set of periodic points of the Gauss transformation is countable and, consequently, has measure zero with respect to the Lebesgue measure.

\subsection{The Hénon model}

The Hénon model was initially used to model astrophysical phenomena, particularly the dynamics of binary systems and galaxies \cite{CA}. The Hénon map is defined on $\mathbb{R}^2$ by
\[
T_H(x, y) = \left(1 - a x^2 +  y ,b x \right),
\]
where $a$  and $b$ are real parameters of the system. The Hénon model is an emblematic example of a chaotic dynamical system within the framework of discrete systems, particularly suited for the study of chaos in 2-dimensional space. This system is generally studied on the space $\mathbb{R}^2$, and its equation is given by
\[
\begin{pmatrix} x_{n+1} \\ y_{n+1} \end{pmatrix} = 
\begin{pmatrix} 1 - a x_n^2 + y_n \\ b x_n \end{pmatrix},
\]
where $(x_n, y_n)$ are the coordinates at time $n$.

The parameters $a$ and $b$ allow the generation of strange attractors in the phase space, with properties such as sensitivity to initial conditions and a fractal structure. To achieve fully chaotic behavior in the Hénon system, the most commonly used parameter values are 
\[
a = 1.4 \quad \text{and} \quad b = 0.3
\]
\cite{He}. Thus, for an appropriate choice of the initial point $(x_0, y_0)$, the iterations of the map $T_H$ densely cover a compact subset $\Omega$ of the phase space. This set $\Omega$, called the Hénon attractor, is a fractal object exhibiting a complex structure and remarkable topological properties, such as its fractal dimension, its connection to chaotic behaviors, and its non-periodic nature. Numerical estimates have been used to estimate the Hausdorff dimension of $\Omega$, which is approximately 1.26, providing an accurate measure of the geometric complexity of this attractor \cite{Ru}.\\

In the particular case of the Hénon map with $a = 1.4$ and $b = 0.3$, the exact expression of the invariant density is difficult to obtain directly due to the complexity of the dynamic system, particularly because of the presence of the fractal structure of $\Omega$. However, this density can be approximated using numerical methods such as the empirical distribution method, which involves observing the frequency of points visited over the iterations, or the Monte Carlo simulations method, which generates many random trajectories to estimate the system's probability distribution. These numerical methods provide efficient approximations of the invariant density, which is often localized in a specific region of phase space, while exhibiting fractal and chaotic characteristics, particularly for the chosen parameters. The following figure illustrates the distribution of points generated by the Hénon map and the associated invariant measure density  with $a = 1.4$, $b = 0.3$, $(x_0,y_0)=(0.1,0.1)$ and $N=10^4$.\\

\hspace{-1cm}\includegraphics[scale=.7]{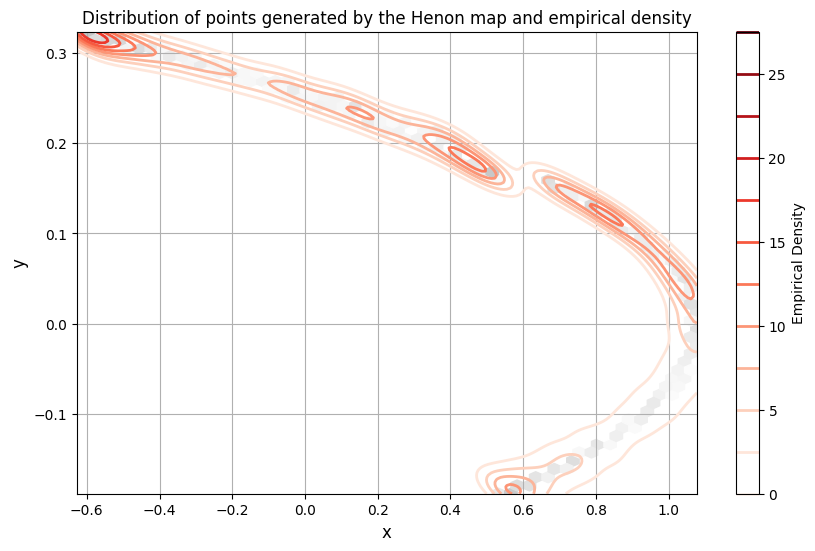}

\section{Conclusion}

In conclusion, this article presents an innovative approach aimed at demonstrating that a random phenomenon can be approximated by a chaotic phenomenon on compact metric spaces, where we have shown that any elementary event in a probabilistic space can be approximated by a convergent subsequence of a chaotic phenomenon. This goes beyond classical statistical tests used to evaluate pseudo-random number generators.\\ 
A natural and promising direction for future research lies in extending these findings to broader settings, such as Polish spaces, locally compact metric spaces, and other more general topological frameworks. Such generalizations could not only deepen the theoretical foundations of this work but also expand its practical applicability, paving the way for novel advancements in the field.

\section*{Conflicts of interst}
The authors declares no conflict of interest.


\end{document}